\documentclass[11pt]{article}
\usepackage[ansinew]{inputenc}
\usepackage[dvips]{graphicx}
\usepackage{amssymb,color}
\usepackage{color}
\usepackage{epic}

\oddsidemargin 1cm \evensidemargin 1cm \textwidth=15.5cm
\textheight=21truecm \unitlength=1cm

\topmargin=-0.5truecm
\parskip 3mm
\unitlength=1cm

\def\BBox{\kern  -0.2cm\hbox{\vrule width 0.2cm height 0.2cm}}

\newtheorem{lemma}{Lemma}[section]
\newtheorem{theorem}{Theorem}[section]

\newtheorem{corollary}{Corollary}[section]

\title{
Advances on the Conjecture of Erd\H{o}s-S\'os for spiders}
\author{C. Balbuena$^1$, M. Guevara$^2$, J.R. Portillo$^3$, P. Reyes$^3$ \\[2 ex]
$^1$ {\small m.camino.balbuena@upc.edu}
\\ {\small\em Departament de Matem\`atica Aplicada III}\\
 {\small\em Universitat Polit\`ecnica de Catalunya}\\
$^2$ {\small mucuy-kak.guevara@ciencias.unam.mx.}
\\ {\small\em  Facultad de Ciencias}\\
 {\small\em Universidad Nacional Aut\'onoma de M\'exico, }\\
$^3$  {\small ~josera@us.es,~preyes@us.es}\\
{\small\em Departamento de Matem\'atica Aplicada I}\\
{\small\em Universidad de Sevilla}}

\date{}

\begin{document}

\maketitle
\begin{abstract}
Results:
\begin{itemize}
\item A hamiltonian graph $G$ verifying $e(G)>n(k-1)/2$  
contains any $k$-spider. 
\item If $G$ is a graph with average degree  $\bar{d} > k-1$, then every spider of size $k$ is contained in $G$ for $k\le 10$.
\item A $2$-connected graph with average degree  $\bar{d} > \ell_2+\ell_3+\ell_4$ contains every spider of $4$ legs $S_{1,\ell_2,\ell_3,\ell_4}$.
We claim also that the condition of $2$-connection is not needed, but the proof is very long and it is not included in this document.
\end{itemize}
\end{abstract}


\section{Introduction}
The Erd\H{o}s-S\'os conjecture \cite{E65} says that a graph $G$ on
$n$ vertices and number of edges $e(G)>n(k-1)/2$   contains all
trees of size $k$.

By $g(n,k)$ Erd\H{o}s and Gallai \cite{EG59} denoted  the maximum
number of edges of a graph $G$ on $n$ vertices containing no
cycles with more than $k$ edges. Moreover, these authors proved
that 
$g(n,k) \le \frac{1}{2}(n-1) k , \mbox{ for } 2\le k \le n.$ 
(\emph{Theorem 2.7})
Thus if $e(G)>  (n-1) k /2$ then $G$ contains a cycle with at
least $k+1$ edges.

Fan and Sun \cite{FL07} used Theorem 2.7 to note that every graph $G$ with
$e(G)>n(k-1)/2$ has a circumference of length at least $k$.
This is
clear because $e(G)>n(k-1)/2> (n-1)(k-1)/2$. Then they used this
observation to prove  that every graph with $e(G)>n(k-1)/2$
contains any $k$-spider of three legs. 
We will prove that   a hamiltonian graph $G$ with $e(G)>n(k-1)/2$  
contains any $k$-spider. We will prove also that a connected graph with $e(G)>n(k-1)/2$ contains all spider of four legs, with one leg of unity length.
\section{Results}

We need to introduce the following notation. Let $S_{\ell_1,\ldots,\ell_f}$ be a $k$-spider of $f$ legs of lengths $\ell_1,\dots,\ell_f$, i.e., $\ell_1+\dots+\ell_f=k$. Let $P_1, \dots, P_f$ be the $f$ legs of the $k$-spider such that $e(P_i)=\ell_i$, $i=1,2,\dots, f$. We may assume that $\ell_1\le \ell_2\le \cdots \le \ell_f$ and if the spider has $4$ legs ($f=4$) and $\ell_1=1$ then $\ell_2\ge 2$ because if $\ell_2=1$, then $S_{1,1,\ell_3,\ell_4}$ is a caterpillar, which is included in any graph $G$ with $e(G)>|V(G)|(k-1)/2$ where $k=\ell_3+\ell_4+2$ \cite{MP93}. Note that $\ell_1\le k/f$ and $\ell_2\le (k-\ell_1)/(f-1)$. (If $f=4$, $\ell_1\le k/4$ and $\ell_2\le (k-\ell_1)/3$).

In general, $\ell_i\le k-\sum\limits_{j=1}^{i-1}\ell_j/(f-i+1)=\sum\limits_{j=i}^{f}\ell_j/(f-i+1)$

Let $G$ be a graph with $e(G)>|V(G)|(k-1)/2$. Let $H$ be a minimal induced subgraph of $G$ such that $e(H)>|V(H)|(k-1)/2$. By the minimality, $H$ is connected and $deg_{H}(v)\ge k/2$ for every $v\in
V(H)$. If $H$ has a copy of $S_{\ell_1,\ell_2,\dots,\ell_f}$, so does $G$.

\begin{theorem}\label{hamilgen}
Let $G$ be a graph and $H$ a hamiltonian subgraph of $G$. Suppose that there exists a vertex $x_0\in V(H)$ such that $deg_H(x_0)\ge k$. Then $H$ contains (and so $G$) any $k$-spider.
\end{theorem}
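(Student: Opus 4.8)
The plan is to exhibit, for an arbitrary $k$-spider $S_{\ell_1,\dots,\ell_f}$ with $\ell_1\le\cdots\le\ell_f$ and $\ell_1+\cdots+\ell_f=k$, an embedded copy inside $H$ having $x_0$ as its centre, carved out of a Hamilton cycle of $H$. To begin with, since $\deg_H(x_0)\le n-1$ for $n:=|V(H)|$, the hypothesis $\deg_H(x_0)\ge k$ forces $n\ge k+1$, so $H$ is large enough to host a $k$-spider. Fix a Hamilton cycle of $H$ and write it as $C=x_0v_1v_2\cdots v_{n-1}x_0$; as every vertex of $H$ lies on $C$, all neighbours of $x_0$ occur among $v_1,\dots,v_{n-1}$, and we may keep exactly $k$ of them, at positions $1\le a_1<a_2<\cdots<a_k\le n-1$. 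The only arithmetic fact needed is that $a_i,a_{i+1},\dots,a_k$ are $k-i+1$ distinct elements of $\{1,\dots,n-1\}$, so $a_i\le n-k+i-1$; more generally, if some of the $a_t$ are later discarded and at least $r$ of them remain, the least surviving one has index at most $n-r$ along $C$.

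The legs will be produced one at a time, \emph{in decreasing order of length}. Put $c_1=1$, and suppose the $j-1$ longest legs have already been built, each of the form $x_0\,v_q\,v_{q+1}\cdots v_{q+\ell-1}$ running forward along $C$, pairwise disjoint, with $c_j-1$ the largest index they occupy. Let $q_j$ be the least index of a neighbour of $x_0$ with $q_j\ge c_j$, declare the $j$-th leg to be $x_0\,v_{q_j}\,v_{q_j+1}\cdots v_{q_j+\ell_{f+1-j}-1}$, and set $c_{j+1}=q_j+\ell_{f+1-j}$. This path has the correct length $\ell_{f+1-j}$: it uses the cycle edge $x_0v_{q_j}$ together with $\ell_{f+1-j}-1$ consecutive edges of $C$. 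Two things must be verified at every step: that a neighbour with $q_j\ge c_j$ exists at all, and that the resulting leg stays inside $\{v_1,\dots,v_{n-1}\}$, i.e. $q_j+\ell_{f+1-j}-1\le n-1$.

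Both verifications follow from the counting. Because $q_t$ was always chosen as the \emph{least} available index $\ge c_t$, no neighbour of $x_0$ lies strictly between two consecutive legs or to the left of the first one; hence every neighbour of $x_0$ at a position $\le c_j-1$ is actually contained in one of the first $j-1$ legs, and there are at most $L_j:=\ell_f+\ell_{f-1}+\cdots+\ell_{f+2-j}$ of these. Consequently at least $k-L_j=\ell_1+\cdots+\ell_{f+1-j}\ge1$ neighbours survive at positions $\ge c_j$, which both supplies $q_j$ and, by the arithmetic fact, forces $q_j\le n-(\ell_1+\cdots+\ell_{f+1-j})$; therefore the $j$-th leg ends at index $q_j+\ell_{f+1-j}-1\le n-1-(\ell_1+\cdots+\ell_{f-j})\le n-1$, as required. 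Once all $f$ legs are placed they are pairwise disjoint apart from $x_0$ (each begins past the last index of its predecessor) and lie wholly inside $C$, so their union is a copy of $S_{\ell_1,\dots,\ell_f}$ in $H$, hence in $G$. The one delicate point is precisely this bookkeeping, and the step I expect to be the main obstacle: one must insert the legs longest first, since a long leg inserted late could be pushed past $v_{n-1}$, and the bound on the least surviving neighbour is tight exactly for the shortest (last) leg, so no slack is available there.
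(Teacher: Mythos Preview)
Your argument is correct and is essentially the paper's proof unwound: the paper proceeds by induction on $k$, peeling off the \emph{shortest} leg $x_0x_1\cdots x_{\ell_1}$, deleting $x_1,\dots,x_{\alpha-1}$ (where $\alpha$ is the least neighbour index $\ge\ell_1+1$) to obtain a smaller Hamiltonian subgraph in which $\deg(x_0)\ge k-\ell_1$, and recursing. In particular your closing remark that one \emph{must} place the longest leg first is not needed---the identical counting shows that any order works, and indeed the paper's induction processes the shortest leg first.
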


\begin{proof}
Let $m=|V(H)|$ and $x_0 x_1 \cdots  x_{m-1} x_0$ a hamiltonian cycle of $H$. We will prove the theorem by induction on $k$.

For $k=2$, it is easy to check that theorem holds, because the only $2$-spider are $S_{1,1}$ and $S_2$ and they are isomorphic to   the path of length $2$, which is contained in the hamiltonian cycle. Moreover $S_{1,1}$ or $S_2$ can be taken with root $x_0$.
For $k=3$, the only  $3$-spiders are  $S_{1,1,1}$ that is contained in $H$ with root $x_0$ because $deg_H(x_0)\ge k=3$, and the isomorphic spiders $S_{1,2}$ and $S_3$ which clearly are contained in $H$ with $x_0$ as root.

Suppose that theorem is true for every $k'$ with $3< k'<k$, and let us show that the theorem is also valid for $k$. Let $S_{\ell_1,\ell_2,\dots,\ell_f}$ be a spider of $f$ legs and size $k$, i.e, $\ell_1+\ell_2+\dots+\ell_f=k$. Let $\alpha$ be the smallest index such that $x_{\alpha}\in N(x_0)$ with $\alpha \ge \ell_1+1$,
 that there exists because $deg_H(x_0)\ge k>\ell_1$.
 Let $H'\subset H$ be the subgraph induced by $V(H)\setminus \{x_1, \dots, x_{\alpha-1}\}$. Clearly $H'$ is hamiltonian because $C=x_0x_{\alpha}x_{\alpha+1}\cdots x_{n-1}x_0$ is a hamiltonian cycle of $H'$. Moreover   $deg_{H'}(x_0)\ge k-\ell_1$.
By the inductive hypothesis on $k$, $H'$ contains all the spiders with root in $x_0$ and size $k-\ell_1$. Particularly, $H'$ contains the spider $S_{\ell_2,\dots,\ell_f}$ whose legs are denoted by $P_2, \dots, P_f$. Then, the spider with root $x_0$ and legs $P_1=x_0,x_1,\dots, x_{\ell_1}$, $P_2,\dots, P_f$ is contained in $H$. Thus $S_{\ell_1,\ell_2,\dots,\ell_f}$ is contained in $G$, finishing the proof.
 \end{proof}

Observe that   $e(G)>|V(G)|(k-1)/2$ is equivalent to the requirement that the average degree $\overline{d}>k-1$ and so maximum degree of $G$, $\Delta(G)\ge \overline{d}>k-1$.

\begin{corollary}\label{hamilton}
Let  $G$ be a hamiltonian graph with average degree  $\bar{d} > k-1$. Then every spider of size $k$ is contained in $G$.
\end{corollary}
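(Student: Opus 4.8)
The plan is to obtain Corollary~\ref{hamilton} as an immediate consequence of Theorem~\ref{hamilgen}, taking $H=G$ itself. First I would recall the observation made just before the corollary: the hypothesis $\bar d > k-1$ is equivalent to $e(G) > |V(G)|(k-1)/2$, and it forces $\Delta(G) \ge \bar d > k-1$. Since vertex degrees are integers, $\Delta(G) > k-1$ actually gives $\Delta(G) \ge k$, so there is a vertex $x_0 \in V(G)$ with $\deg_G(x_0) \ge k$.

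Next, since $G$ is hamiltonian, $G$ is a hamiltonian subgraph of itself. Applying Theorem~\ref{hamilgen} with $H = G$ and with the vertex $x_0$ just produced, the hypotheses are satisfied because $\deg_H(x_0) = \deg_G(x_0) \ge k$. The conclusion of that theorem is precisely that $H = G$ contains every $k$-spider, which is the assertion of the corollary.

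I do not anticipate any genuine obstacle here: the single point that needs a word of justification is the passage from the average-degree inequality to the existence of a vertex of degree at least $k$, which follows from the pigeonhole principle (some vertex attains at least the average degree) together with the integrality of degrees. Everything else is a direct invocation of Theorem~\ref{hamilgen} in the special case $H = G$.
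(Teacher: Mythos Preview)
Your argument is correct and matches the paper's approach exactly: the paper states the corollary without proof, relying on the preceding observation that $\bar d>k-1$ forces $\Delta(G)\ge k$, so Theorem~\ref{hamilgen} applies with $H=G$. There is nothing to add.
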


\begin{lemma}\label{ciclovalmax} 
Let $G$ be a 2-connected graph with average degree $\bar{d} > k-1$. Then
every vertex of degree at least $k$ lies on a cycle  $C_s$ of length $s\ge k$.
\end{lemma}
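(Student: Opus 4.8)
The plan is to argue by contradiction: suppose $v$ is a vertex with $\deg_G(v)\ge k$ that lies on no cycle of length $\ge k$. The first step is to observe that $G$ nevertheless contains \emph{some} long cycle. Since $\overline d>k-1$ we have $e(G)>n(k-1)/2>(n-1)(k-1)/2$, so the Erd\H{o}s--Gallai bound $g(n,k-1)\le\frac{1}{2}(n-1)(k-1)$ quoted in the introduction shows that $G$ has a cycle with at least $k$ edges; fix a longest cycle $C^{\ast}$ of $G$ and put $c=|V(C^{\ast})|\ge k$. The contradiction hypothesis forces $v\notin C^{\ast}$, and the whole difficulty becomes that of \emph{transporting} $v$ onto a cycle that stays long.

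The second step sets up the standard longest-cycle machinery. By $2$-connectivity and $|V(C^{\ast})|\ge 2$, Menger's theorem gives two internally disjoint $v$--$V(C^{\ast})$ paths ending at distinct $x,y\in V(C^{\ast})$; splicing them with the longer of the two $x$--$y$ arcs of $C^{\ast}$ already yields a cycle through $v$ of length at least $2+\lceil c/2\rceil\ge 2+\lceil k/2\rceil$, which settles the cases $k\le 5$. For $k\ge 6$ I would instead work with a longest cycle $C$ \emph{through $v$}; by assumption $|C|=s\le k-1$. Write $R=V(G)\setminus V(C)$. Maximality of $C$ forces the usual insertion restrictions: for any component $D$ of $G[R]$ the attachment set $N_C(D)\subseteq V(C)$ has no two cyclically consecutive vertices (otherwise insert a $D$-path and enlarge $C$ while keeping $v$ on it), so $|N_C(D)|\le\lfloor s/2\rfloor\le\lfloor(k-1)/2\rfloor$; more generally, no path through $D$ joining two attachments $v_i,v_j$ is longer than the $v$-avoiding $v_i$--$v_j$ arc of $C$. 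The degree hypothesis enters here: $v$ has at most $s-1$ neighbours on $C$, hence at least $\deg_G(v)-(s-1)\ge 2$ neighbours in $R$, so what lies outside $C$ carries real weight.

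The third step, which I expect to be the main obstacle, is to rule out ``fat'' components of $G[R]$. If some $w\in R$ had $\deg_G(w)\ge k$, then since it has at most $\lfloor(k-1)/2\rfloor$ neighbours on $C$ it has many neighbours inside its component $D$; combined with the fact that in a $2$-connected graph every vertex of $D$ lies on an attachment-to-attachment path whose interior stays in $D$ (two disjoint $w$--$V(C)$ paths, concatenated), this should yield a path through $D$ joining two attachments $v_i,v_j$ that is \emph{longer} than the $v$-avoiding $v_i$--$v_j$ arc of $C$, and rerouting $C$ along it produces a cycle through $v$ of length $>s$, a contradiction. Granting this, every vertex $w\in R$ satisfies $\deg_G(w)\le|D|-1+\lfloor s/2\rfloor\le k-1$, while the rotation and chord restrictions on a longest cycle similarly bound $\deg_G(w)$ for $w\in V(C)$; summing degrees would then contradict $\sum_w\deg_G(w)=2e(G)>n(k-1)$. \emph{Main difficulty.} The crux is exactly the reconciliation of a large, dense component $D$ of $G-V(C)$ with the maximality of a longest cycle through $v$ --- showing such a $D$ can always be ``threaded'' through $v$ --- together with the related need to make the final degree count \emph{strict} rather than off by an additive constant (after all, $v$ itself has degree $\ge k$); it is in the latter point that $2$-connectivity must be used quantitatively, via the genuine gaps it forces between a component's attachment vertices on $C$.
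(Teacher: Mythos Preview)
The paper's proof is two lines: it notes that $\bar d>k-1$ gives $\Delta(G)\ge k$ (so the hypothesis is non-vacuous) and then simply invokes Theorem~1.16 of Erd\H{o}s--Gallai~\cite{EG59}, which supplies the long cycle through the vertex directly. Your proposal, by contrast, tries to rederive the statement from scratch via longest-cycle-through-$v$ machinery, and the difficulties you flag at the end are genuine obstacles, not just details to be filled in. The degree-sum contradiction cannot close as written: you want $\sum_w\deg_G(w)\le n(k-1)$, but $v$ itself already contributes at least $k$, and nothing in the outline produces vertices of degree strictly below $k-1$ to compensate. Worse, $K_{2,n}$ (2-connected, with two vertices of degree $n$ but circumference~$4$) shows the lemma is \emph{false} if one drops the average-degree hypothesis, so that hypothesis must enter the argument quantitatively, not merely to furnish one long cycle $C^{\ast}$ at the outset. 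In your scheme, once you pass to the longest cycle $C$ through $v$ the edge count is never used again until the final summation --- and $K_{2,n}$ shows exactly why that summation, armed only with the structural bounds you list, can fail (there $b\in V(C)$ has degree $n$ and no ``rotation and chord restrictions'' forbid it). The ``threading'' step is also a real gap: a high-degree $w\in D$ yields a long path \emph{starting at} $w$, but you need a long path between two \emph{prescribed} attachment vertices on $C$, which is a separate lemma of its own.

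In short, what you are sketching is essentially a proof of the cited Erd\H{o}s--Gallai theorem itself, which is substantially harder than the role of this lemma in the paper warrants; the authors take the shorter route of quoting it.
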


\begin{proof}
Since  $\bar{d}> k-1$ then $\Delta(G) \geq k$.
The results follows directly from Theorem 1.16 in  \cite{EG59}.
\end{proof}

\begin{corollary}\label{kmenorque11}
Let  $G$ be a graph with average degree  $\bar{d} > k-1$. Then every spider of size $k$ is contained in $G$ for $k\le 9$.
\end{corollary}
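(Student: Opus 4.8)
By the reduction explained above we may assume $G$ is connected, $\deg_G(v)\ge k/2$ for every vertex $v$, and $G$ has a vertex $x_0$ with $\deg_G(x_0)=\Delta(G)\ge k$. The plan is to reduce the statement to a finite list of spiders and then to embed each of them by hand, rooted at $x_0$.

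Recall that a spider is a caterpillar exactly when at most two of its legs have length $\ge 2$, and that every caterpillar of size $k$ lies in $G$ by \cite{MP93}. Every spider with at most three legs lies in $G$ by the theorem of Fan and Sun \cite{FL07}; every four-legged spider $S_{1,\ell_2,\ell_3,\ell_4}$ with a leg of length $1$ lies in $G$ by the four-legs result of this paper (and if one prefers not to invoke it, the four spiders $S_{1,2,2,2},S_{1,2,2,3},S_{1,2,2,4},S_{1,2,3,3}$ can simply be adjoined to the list that follows). Now a non-caterpillar spider has at least three legs of length $\ge 2$, hence has at most $k-3\le 6$ legs; running through the partitions of $k\le 9$ with $\ell_1\le\cdots\le\ell_f$ and deleting the families just mentioned leaves exactly
\[
S_{2,2,2,2},\quad S_{2,2,2,3},\quad S_{1,1,2,2,2},\quad S_{1,1,2,2,3},\quad S_{1,2,2,2,2},\quad S_{1,1,1,2,2,2}.
\]
It therefore suffices to embed each of these six spiders in $G$.

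For each of them I would put the root at $x_0$. Because $\deg_G(x_0)\ge k$, the first edge of every leg can be chosen inside $N(x_0)$ with several neighbours of $x_0$ left over, and each leg of length $\ge 2$ is then grown one vertex at a time: when we seek the next vertex of a leg we must avoid at most $k-1$ already-placed vertices, and the bound $\deg_G(v)\ge\lceil k/2\rceil$ makes this possible for $k\le 9$ except in a handful of tight local configurations, which are cleared by rerouting (swapping two legs, or replacing a stalled extension by an unused neighbour of $x_0$). When $G$ is $2$-connected the argument is cleaner: by Lemma \ref{ciclovalmax} the vertex $x_0$ lies on a cycle $C$ of length at least $k$, the two arcs of $C$ issuing from $x_0$ already realise the two longest legs, and only the remaining short legs have to be attached, once more using the surplus neighbours of $x_0$.

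The genuine obstacle is the case where $G$ has a cut vertex, so that Lemma \ref{ciclovalmax} does not apply. Here one must work with the block decomposition of $G$: every endblock $B$ is $2$-connected, has at least $\lceil k/2\rceil+1$ vertices, and every vertex of $B$ other than the cut vertex has all its neighbours in $B$ and hence degree $\ge\lceil k/2\rceil$ there; one then shows that each of the six exceptional spiders embeds either entirely inside one block or across a cut vertex. Completing this, together with the verification of the finitely many tight greedy configurations, is the bulk of the proof, and it uses nothing beyond the minimum degree bound, the vertex $x_0$ of degree $\ge k$, and Lemma \ref{ciclovalmax}. The same scheme carries over to $k=10$ at the cost of a somewhat longer exceptional list.
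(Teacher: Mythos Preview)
Your argument has a genuine gap: after producing the six exceptional spiders you do not actually embed any of them. What follows is a plan (``grow each leg greedily, reroute in tight cases, handle cut vertices via the block decomposition''), together with the admission that the cut-vertex case ``is the bulk of the proof''. A greedy extension with $\deg\ge\lceil k/2\rceil$ against up to $k-1$ forbidden vertices does \emph{not} succeed in general for $k\le 9$ (for $k=9$ you have $\lceil k/2\rceil=5$ and may need to avoid $8$ vertices), so the promised rerouting is where all the content lies, and none of it is supplied. Invoking the four-leg theorem of this paper is also delicate here: Corollary~\ref{kmenorque11} precedes Theorems~\ref{mainbiconex} and~\ref{main}, and the proof of Theorem~\ref{main} is not even in the paper.

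More importantly, you are missing the tool that makes the paper's proof immediate. Fan and Sun prove not only the three-leg case (their Theorem~3.1) but also that \emph{every spider whose legs all have length at most~$4$} embeds in $G$ (their Theorem~4.1). Every spider on your list --- $S_{2,2,2,2}$, $S_{2,2,2,3}$, $S_{1,1,2,2,2}$, $S_{1,1,2,2,3}$, $S_{1,2,2,2,2}$, $S_{1,1,1,2,2,2}$, as well as the four $S_{1,\ell_2,\ell_3,\ell_4}$ you set aside --- has all legs of length $\le 3$, so Theorem~4.1 disposes of them at once. With that result in hand the only spiders of size $\le 9$ not covered by Theorems~3.1 and~4.1 of \cite{FL07} are those with $\ge 4$ legs and a leg of length $\ge 5$; for $k\le 9$ these are forced to be caterpillars (the remaining legs sum to at most~$4$, so at most one further leg can exceed~$1$), hence are handled by \cite{MP93}. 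That is the paper's proof, and it needs no ad~hoc embeddings at all.
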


\begin{proof}
Every spider with legs of length at most 4 are contained in $G$ by Theorem 4.1 of \cite{FL07}. Moreover, every spider with three legs are contained in $G$ by Theorem 3.1 of \cite{FL07}. Therefore the remaining spiders are the comet $S_{1,1,1,1,5}$ and the caterpillar $S_{1,1,2,5}$ and therefore the result is valid by~\cite{MP93}.
\end{proof}


\begin{theorem}\label{mainbiconex}
Let $G$  be a 2-connected graph with average degree  $\bar{d} > k-1$. Then $G$ contains every $k$-spider $S_{1,\ell_2,\ell_3,\ell_4}\ (k=1+\ell_2+\ell_3+\ell_4)$.
\end{theorem}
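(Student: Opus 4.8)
The plan is to pass to the minimal induced subgraph $H$ described in the preamble, so that $H$ is $2$-connected—or at least connected with minimum degree at least $k/2$—and has a vertex $x_0$ of degree at least $\Delta(G)\ge k$. By Lemma~\ref{ciclovalmax}, $x_0$ lies on a cycle $C$ of length $s\ge k$; write $C=x_0x_1\cdots x_{s-1}x_0$. I would then try to build the spider $S_{1,\ell_2,\ell_3,\ell_4}$ with root somewhere using $C$ together with one extra edge at $x_0$: three of the legs will run along arcs of $C$ and the fourth (the leg of length $1$) will be an edge leaving the cycle. Concretely, since $\deg_H(x_0)\ge k\ge \ell_2+\ell_3+\ell_4+1 > $ (length of the three arcs used), $x_0$ has a neighbour $y$ that is not forced to lie among the at most $\ell_2+\ell_3+\ell_4$ cycle-vertices we need; if such a $y$ can be chosen outside those arcs, the edge $x_0y$ is the leg of length $1$ and we are done immediately.

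The difficulty is precisely when every neighbour of $x_0$ lies on $C$ inside the stretch of vertices we want to use for the three long legs, i.e. when $x_0$ has many neighbours bunched on a short arc. In that case I would reroute: because $s\ge k=1+\ell_2+\ell_3+\ell_4$, there is enough room on $C$ to realise the spider $S_{\ell_2,\ell_3,\ell_4}$ (three legs, size $k-1$) with root $x_0$ using Fan–Sun's three-leg result applied to the cycle $C$ of length $\ge k-1$, and then to find a separate edge $x_0y$ for the fourth leg by a counting argument: $x_0$ has at least $k$ neighbours but the three-leg spider uses only $\ell_2+\ell_3+\ell_4=k-1$ other vertices of $C$, so $x_0$ has a neighbour $y$ not among the internal vertices of the three legs; taking $x_0y$ as $P_1$ completes a copy of $S_{1,\ell_2,\ell_3,\ell_4}$. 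One must check $y$ is distinct from all leg-vertices: $y\ne x_0$ trivially, and $y$ is allowed to be a leaf of another leg only if that would create a repeated vertex, which the degree bound $\deg(x_0)\ge k>$ (number of forbidden vertices) rules out.

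The genuinely delicate case, and the one I expect to be the main obstacle, is when the long cycle $C$ through $x_0$ has length exactly $k-1$ or is just barely too short to host all three long legs with a vertex to spare, or when $C$ is long but the only available extra edge at $x_0$ lands inside an arc we need. Here I would use $2$-connectedness seriously: by Menger/fan-type arguments there are two internally disjoint $x_0$–$C$ paths, or equivalently one can enlarge $C$ or find an ear, giving an auxiliary vertex off the main arc to serve as the degree-$1$ leg's leaf. A careful case split on which of $\ell_2,\ell_3,\ell_4$ are equal and on the position of the extra neighbour relative to the three arcs then finishes it; I would organise this split by the value of $\ell_2$ (recall $\ell_2\ge 2$ and $\ell_2\le (k-1)/3$) and treat $\ell_2=2$ separately, since small legs leave the most slack on $C$ but also the most opportunity for neighbour collisions. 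The $2$-connection hypothesis is what makes the rerouting available; dispensing with it (as claimed in the abstract) is exactly why that stronger statement needs a much longer argument.
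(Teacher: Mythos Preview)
Your setup is right---take $x_0$ of maximum degree $\ge k$ and a longest cycle $C_s=x_0x_1\cdots x_{s-1}x_0$ through $x_0$ with $s\ge k$---but the core step cannot be justified as written: you invoke Fan--Sun to obtain $S_{\ell_2,\ell_3,\ell_4}$ \emph{rooted at $x_0$}, yet their theorem only produces a three-leg spider somewhere in the graph, not at a prescribed vertex. From $x_0$ the cycle supplies only two arcs, so two long legs $P_3,P_4$ come for free, but the third long leg $P_2$ must be built either from a chord $x_0x_h$ together with a further arc, or from a path $P=x_0u_1\cdots u_\ell$ leaving $C_s$. That is where the actual work lies, and your outline does not engage with it. (When $N(x_0)\subset V(C_s)$ the paper disposes of the problem in one line via Theorem~\ref{hamilgen}, which \emph{does} root the spider at $x_0$; this is the correct substitute for your appeal to Fan--Sun.)

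The substantive case is $N(x_0)\not\subset V(C_s)$, and the paper splits on whether the off-cycle path $P$ is long enough. If $\ell\ge\ell_2$, a short case analysis on $N(x_0)\cap V(P)$ locates the spider. The hard case is $\ell<\ell_2$: here one must analyse $N(u_\ell)$, the neighbours of the \emph{endpoint} of $P$, and exploit the maximality of $C_s$ to show that $u_\ell$ has no neighbour among $x_1,\ldots,x_\ell$ or $x_{s-\ell},\ldots,x_{s-1}$ and no two consecutive neighbours on $C_s$; a counting argument then yields some $x_\alpha\in N(u_\ell)$ with $\alpha$ small enough that $P$ can be prolonged through $x_\alpha$ into a leg of length $\ell_3$ disjoint from $P_4$. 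Your ``delicate case'' discussion misses this entirely: by Lemma~\ref{ciclovalmax} the cycle is never shorter than $k$, and ``two internally disjoint $x_0$--$C$ paths'' is vacuous since $x_0\in C$. The $2$-connectedness hypothesis is used only to invoke Lemma~\ref{ciclovalmax}; the real content is the bookkeeping on $N(u_\ell)$, which your proposal omits.
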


\begin{proof}
We will suppose that $2\le \ell_2\le\ell_3\le\ell_3$ (otherwise $S_{1,1,\ell_3,\ell_4}$ is a caterpillar and is contained in $G$).
Let $x_0 \in V(G)$ be with $deg_G(x_0)=\Delta(G)\ge k$.
By Lemma~\ref{ciclovalmax}, we can take  a cycle $C_s$ of maximum length  $|C_s|=s\ge k$  such that $x_0\in V(C_s)$. Let $C_s=x_0  x_1 \cdots  x_{s-1}x_0$.
 If $N(x_0)\subset V(C_s)$, the subgraph $H$ of $G$ induced  by the vertices of $C_s$
  is clearly   hamiltonian   and has a vertex   $x_0$  of
Therefore by Theorem~\ref{hamilgen}, $H$ (and so $G$) contains all spiders of size $k$ and particularly $S_{1,\ell_2,\ell_3,\ell_4}$.

Hence assume that $N(x_0)\not\subset V(C_s)$. Therefore we can consider a path   $P= x_0 u_1\cdots u_\ell$  starting in $x_0$  of maximum length such that $V(C_s)\cap V(P)=\{x_0\}$. Two cases need to be distinguished according to $\ell\ge \ell_2$ or $\ell< \ell_2$.

\emph{Case 1:  $\ell\ge \ell_2$.}

If there exists $ y\notin V(C_s)\cup V(P)$ such that $y\in N(x_0)$, then  $S_{1,\ell_2,\ell_3,\ell_4}$ is contained in $G$ and its legs are
  $P_1=x_0y$, $P_2=x_0 u_1\cdots u_{\ell_2}$,
$P_3=x_0 x_1\cdots x_{\ell_3}$ and $P_4=x_0 x_{s-1}\cdots x_{s-\ell_4}$
(see Figure~\ref{fig:erdos-sos1}). Therefore, assume $N(x_0)\subset V(C_s)\cup V(P)$ and let us study the following subcases.

\begin{figure}
  \begin{minipage}[t]{.30\textwidth}
    \centering
    \includegraphics{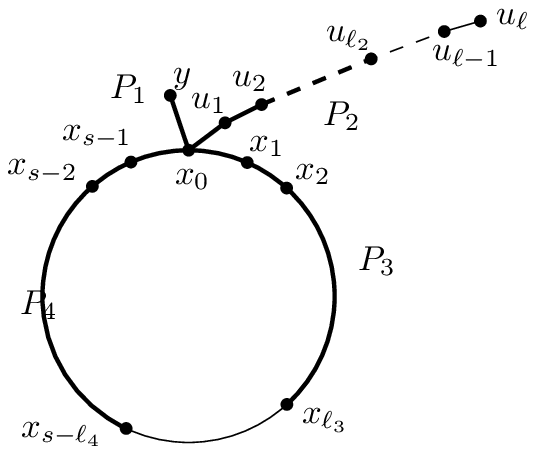}
    \caption{$\ell\ge \ell_2$ and $N(x_0)\not\subset V(C)\cup V(P)$.}
    \label{fig:erdos-sos1}
  \end{minipage}
   \hspace*{0.03\textwidth}
  \begin{minipage}[t]{.30\textwidth}
    \centering
    \includegraphics{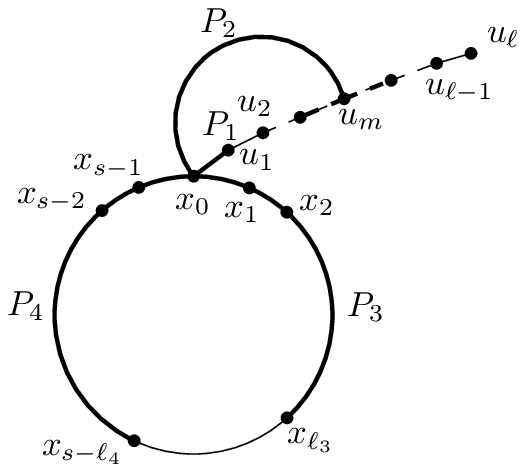}
    \caption{$\ell\ge \ell_2$, $u_m\in N(x_0)\cap V(P)$,   $2\geq m\geq \ell-\ell_2+1$
    or $\ell_2+1\le m$.}
    \label{fig:erdos-sos2}
  \end{minipage}
  \hspace*{0.03\textwidth}
  \begin{minipage}[t]{.30\textwidth}
    \centering
    \includegraphics{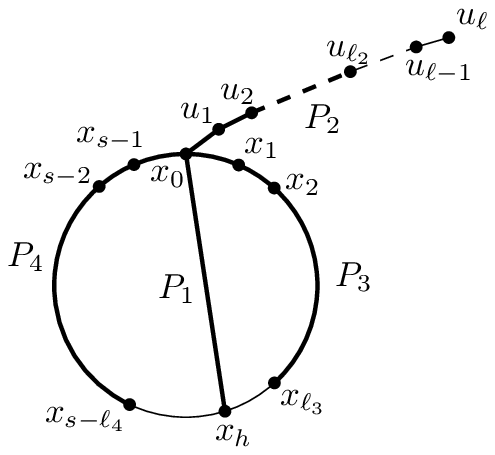}
    \caption{$\ell\ge \ell_2$, $N(x_0)\cap V(P)\subset \{u_{\ell-\ell_2+2},\dots,u_{\ell_2}\}$.}
    \label{fig:erdos-sos3}
  \end{minipage}
\end{figure}

\begin{enumerate}
\item[(a)] If $u_m\in N(x_0)$  with  $2\leq m\leq \ell-\ell_2+1$
, then $G$ contains the spider $S_{1,\ell_2,\ell_3,\ell_4}$,
of legs $P_1=x_0 u_1$, $P_2=x_0 u_m u_{m+1}\cdots u_{m+\ell_2-1}$, $P_3=x_0 x_1\cdots x_{\ell_3}$
and $P_4=x_0 x_{s-1}\cdots x_{s-\ell_4}$ (see Figure~\ref{fig:erdos-sos2}).

\item[(b)] If $u_m\in N(x_0)$  with $\ell_2+1\le m$, then $G$ contains the spider $S_{1,\ell_2,\ell_3,\ell_4}$  of legs
$P_1=x_0 u_1$, $P_2=x_0 u_m u_{m-1}\cdots u_{m-\ell_2+1}$, $P_3=x_0 x_1\cdots x_{\ell_3}$ and
$P_4=x_0 x_{s-1}\cdots x_{s-\ell_4}$ (see Figure~\ref{fig:erdos-sos2}).

\item[(c)] Otherwise we must distinguish between two different situations. If $\ell-\ell_2+2\leq \ell_2$ then $(N(x_0)-u_1)\cap V(P) \subseteq \{u_{\ell-\ell_2+2},\dots,u_{\ell_2}\}$, and $|N(x_0)\cap V(P)|\le \ell_2-(\ell-\ell_2+2)+1=2\ell_2-\ell-1\le 2\ell_2-\ell_2-1= \ell_2-1$. By the contrary, if $\ell-\ell_2+2> \ell_2$ then $N(x_0)\cap V(P)=\{ u_1\}$ and $|N(x_0)\cap V(P)|=1\le \ell_2-1$.
Therefore, in both cases,
$|N(x_0)\cap V(C_s)|\geq k-1- \ell_2+1=k- \ell_2=1+\ell_3+\ell_4$. Thus there must exist an edge $x_0x_h$ with $\ell_3<h<s-\ell_4$ so that $G$ contains the spider  $S_{1,\ell_2,\ell_3,\ell_4}$ of legs
  $P_1=x_0x_h$, $P_2=x_0 u_1\cdots u_{\ell_2}$,
$P_3=x_0 x_1\cdots x_{\ell_3}$ and $P_4=x_0 x_{s-1}\cdots x_{s-\ell_4}$ (see Figure~\ref{fig:erdos-sos3}).


\end{enumerate}

\emph{Case 2: $\ell< \ell_2$.}

Note that  $N(u_\ell)\subset V(C_s)\cup V(P)$ as $P$ has maximum length. Since $|N(u_\ell)\cap V(P)|\le \ell\le \ell_2-1$,
  $N(u_\ell)\not\subset V(P)$ because $ \ell_2\le(k-1)/3<k/2 \le deg(u_\ell)$.
Note also that since $C_s$ has maximum length, $  N(u_{\ell})\cap  \{x_1,\ldots, x_\ell\}=\emptyset$, otherwise the cycle
  $x_0u_1\cdots u_{\ell}x_ix_{i+1}\cdots x_{s-1}x_0$ would have a length greater than $s$ which is a contradiction.
 Similarly, $  N(u_{\ell}) \cap \{x_{s-\ell}, \ldots, x_{s-1}\}$ $=\emptyset$,
otherwise the cycle
$x_0u_1\cdots u_{\ell}x_i x_{i-1}\cdots x_{1}x_0$ would have a length greater than $s$ again a contradiction   (see Figure~\ref{fig:erdos-sos4}).

We may notice that if $x_j, x_{j+1}\in V(C_s)$ and $x_j\in N(u_\ell)$, then $ x_{j+1}\not\in N(u_\ell)$   because $C_s$ has maximum length.
Since $|\{x_{\ell+1},\dots,x_{\ell_2}\}|=|\{x_{s-\ell_2},\dots,x_{s-\ell-1}\} |=\ell_2-\ell $ then
$|N(u_\ell)\cap(  \{x_{\ell+1},\dots,x_{\ell_2}\}\cup\{x_{s-\ell_2},\dots,x_{s-\ell-1}\})|\le  2\left\lceil(\ell_2-\ell)/2\right\rceil$. Since $|N(u_\ell)\cap V(P)|\le \ell$, it follows that $|N(u_\ell)\cap  \{x_{\ell_2+1},\dots,x_{s-\ell_2-1}\}|\ge  k/2-2\left\lceil(\ell_2-\ell)/2\right\rceil-\ell\ge k/2-\ell_2-1>0$ (because $\ell\le \ell_2-1$).
Let $\alpha$ be the smallest index such that $x_\alpha\in N(u_\ell)\cap  \{x_{\ell_2+1},\dots,x_{s-\ell_2-1}\}$.
Since $k/2-\ell_2-1\le \left|N(u_\ell)\cap  \{x_{\ell_2+1},\dots,x_{s-\ell_2-1}\}\right|=\left|N(u_\ell)\cap  \{x_{\alpha},\dots,x_{s-\ell_2-1}\}\right|\le
\left\lceil(s-\ell_2-\alpha)/2\right\rceil\le (s-\ell_2-\alpha+1)/2 $ then $\alpha \le s-k+\ell_2+3 $.
But the inequality does not hold because it would mean that $2\left\lceil(\ell_2-\ell)/2\right\rceil=\ell_2-\ell+1$, so $u_{\ell}$ would be adjacent to $x_{s-\ell_2}$, and $\left\lceil(s-\ell_2-\alpha)/2\right\rceil=(s-\ell_2-\alpha+1)/2$ and $u_{\ell}$ would be adjacent to $x_{s-\ell_2-1}$ too, and that it is not possible because of the maximality of the cycle. So $\alpha < s-k+\ell_2+3 $.

\begin{figure}
  \begin{minipage}[t]{.30\textwidth}
    \centering
    \includegraphics{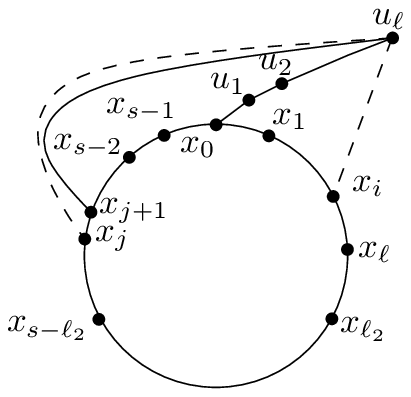}
    \caption{$\ell<\ell_2$.}
    \label{fig:erdos-sos4}
  \end{minipage}
   \hspace*{0.03\textwidth}
  \begin{minipage}[t]{.30\textwidth}
    \centering
    \includegraphics{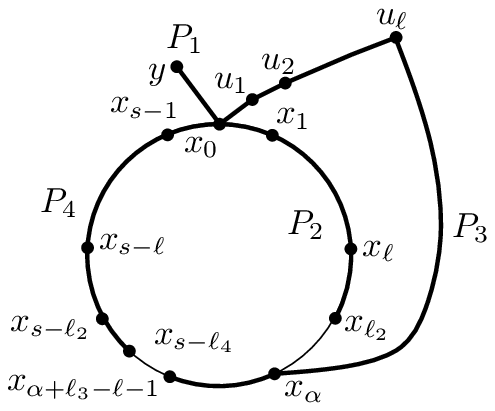}
    \caption{$\ell<\ell_2$, $N(x_0)\not\subset V(C)\cup V(P)$.}
    \label{fig:erdos-sos5}
  \end{minipage}
  \hspace*{0.03\textwidth}
  \begin{minipage}[t]{.30\textwidth}
    \centering
    \includegraphics{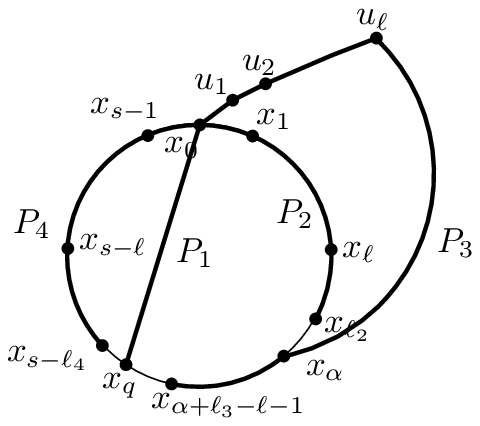}
    \caption{$\ell<\ell_2$, $N(x_0)\subset V(C)\cup V(P)$.}
    \label{fig:erdos-sos6}
  \end{minipage}
\end{figure}

If there exists $ y\notin V(C_s)\cup V(P)$ such that $y\in N(x_0)$, then  the spider $S_{1,\ell_2,\ell_3,\ell_4}$ with legs
  $P_1=x_0y$,  $P_2=x_0 x_1\cdots x_{\ell_2}$,
$P_3=x_0 u_1\cdots u_{\ell}x_\alpha\cdots x_{\alpha+\ell_3-\ell-1}$ and $P_4=x_0 x_{s-1}\cdots x_{s-\ell_4}$
 is contained in $G$ because $\alpha+\ell_3-\ell-1 <s-k+\ell_2+3+\ell_3-\ell-1=s-\ell_4+1-\ell\le s-\ell_4$ (see Figure \ref{fig:erdos-sos5}).
If $N(x_0)\subset V(C_s)\cup V(P)$, since $|N(x_0)\cap V(P)|\le \ell$, it follows that $|N(x_0)\cap V(C_s)|\ge k-\ell$. As the index set
$I=\{1,\ldots,\ell_2,\alpha,\ldots,   {\alpha+\ell_3-\ell-1},{s-\ell_4},\ldots,{s-1}\}$ has cardinality $k-\ell-1$, there must exist
  $x_q\in N(x_0)$ ($q\not\in I$). As
$\alpha+\ell_3-\ell-1<s-\ell_4$, the spider of legs
$P_1=x_0x_q$, $P_2=x_0  x_1\cdots x_{\ell_2}$, $P_3=x_0 u_1\cdots u_{\ell}x_\alpha\cdots x_{\alpha+\ell_3-\ell-1}$
and $P_4=x_0 x_{s-1}\cdots x_{s-\ell_4}$ is contained in $G$ (see Figure \ref{fig:erdos-sos6}).
 \end{proof}

\begin{theorem}\label{main}
If $G$ is a connected graph with average degree $\bar{d} > k-1$, then $G$ contains every
$k$-spider $S_{1,\ell_2,\ell_3,\ell_4}\ (k=1+\ell_2+\ell_3+\ell_4)$.
\end{theorem}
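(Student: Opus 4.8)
The plan is to reduce Theorem~\ref{main} to Theorem~\ref{mainbiconex} by an induction on the order that peels off end blocks. As observed at the beginning of Section~2, after replacing $G$ by a minimal induced subgraph $H$ with $e(H)>|V(H)|(k-1)/2$ we may assume $H$ is connected, $deg_H(v)\ge k/2$ for every $v\in V(H)$, and $\Delta(H)\ge\overline d>k-1$, hence $\Delta(H)\ge k$; it suffices to embed $S_{1,\ell_2,\ell_3,\ell_4}$ in $H$. If $H$ is $2$-connected, Theorem~\ref{mainbiconex} finishes; if $k\le 9$, Corollary~\ref{kmenorque11} finishes. So assume $k\ge 10$ (so $\delta(H)\ge 5$) and that $H$ has a cut vertex, and argue by induction on $|V(H)|$, the two situations just settled serving as base cases.

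Pick an end block $B$ of $H$, with unique cut vertex $c$, and set $H'=H-(V(B)\setminus\{c\})$, which is connected with $|V(H')|<|V(H)|$. Since $B$ is a leaf of the block--cut tree, every $v\in V(B)\setminus\{c\}$ is a non-cut vertex of $H$, so $N_H(v)\subseteq V(B)$ and $deg_B(v)=deg_H(v)\ge k/2\ge 2$; thus $B$ is not a single edge, i.e.\ $B$ is $2$-connected, and $c$ has a neighbour $z\in V(H)\setminus V(B)$ (it lies in a second block). \emph{Light case, $2e(B)\le(|V(B)|-1)(k-1)$:} since $e(H')=e(H)-e(B)$ and $|V(H')|=|V(H)|-|V(B)|+1$, one has $2e(H')-|V(H')|(k-1)=\bigl(2e(H)-|V(H)|(k-1)\bigr)-\bigl(2e(B)-(|V(B)|-1)(k-1)\bigr)>0$, so $e(H')>|V(H')|(k-1)/2$ and the induction hypothesis places $S_{1,\ell_2,\ell_3,\ell_4}$ in $H'\subseteq H$.

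\emph{Heavy case, $2e(B)>(|V(B)|-1)(k-1)$:} since $2e(B)\le|V(B)|(|V(B)|-1)$ this forces $|V(B)|\ge k$, and then $e(B)$ exceeds the Erd\H{o}s--Gallai bound $g(|V(B)|,k-1)\le(|V(B)|-1)(k-1)/2$ recalled in the introduction, so $B$ has a cycle of length $\ge k$. When $|V(B)|=k$, $B=K_k$: take a Hamilton cycle $c\,x_1x_2\cdots x_{k-1}\,c$ of $B$ and the four internally disjoint legs $c\,z$, $c\,x_1\cdots x_{\ell_2}$, $c\,x_{k-1}x_{k-2}\cdots x_{k-\ell_3}$ and $c\,x_{\ell_2+1}x_{\ell_2+2}\cdots x_{k-\ell_3-1}$; the last has length $k-\ell_3-\ell_2-1=\ell_4$, and since $\ell_2\ge 2$ the three interior segments of the cycle are disjoint, so we obtain $S_{1,\ell_2,\ell_3,\ell_4}$. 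When $|V(B)|\ge k+1$ I would re-run the proof of Theorem~\ref{mainbiconex} inside $B$, using the edge $cz$ for the leg of length $1$ whenever the argument calls for a neighbour of the root outside $V(C_s)\cup V(P)$: the only facts about the ambient graph used in that proof are that the root has degree $\ge k$, that the far end $u_\ell$ of the auxiliary path has degree $\ge k/2$, and the maximality of the cycle $C_s$ and of the path $P$; the degree condition on $u_\ell$ holds for every vertex of $B\setminus\{c\}$, and choosing $P$ to be longest in $H$ rather than in $B$ forces $u_\ell\ne c$, because reaching $c$ would let us prolong $P$ into $H'$.

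The step I expect to be the main obstacle is precisely this heavy case with $|V(B)|\ge k+1$: one must secure a legitimate root of degree $\ge k$ lying on a longest cycle of $B$, while heaviness bounds $e(B)$ from below only by $(|V(B)|-1)(k-1)/2$, which yields $\Delta(B)\ge k-1$ but not $\Delta(B)\ge k$. Resolving this seems to require either exploiting the structure of the near-extremal graphs for the Erd\H{o}s--Gallai circumference bound, or, as in the $K_k$ subcase, routing one short leg through $c$ into $H'$ and then carefully re-checking that the delicate counting in Case~$\ell<\ell_2$ of the proof of Theorem~\ref{mainbiconex} — where a neighbour of $u_\ell$ has to be found in the ``middle'' of $C_s$ — still goes through after the modification.
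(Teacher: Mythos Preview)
The paper does not actually supply a proof of Theorem~\ref{main}: after the statement it says only that the argument is ``similar to the proof of Theorem~\ref{mainbiconex}, but is very long and it is not included here.'' So there is nothing detailed to compare against, beyond the hint that the authors intend a direct extension of the cycle-plus-path analysis of the $2$-connected case rather than the block-tree reduction you propose.

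On the merits of your outline: the light-case arithmetic is correct and the induction on $|V(H)|$ is well set up (and one can sharpen the dichotomy by observing that if in fact $e(B)>|V(B)|(k-1)/2$ then Theorem~\ref{mainbiconex} applies to the $2$-connected block $B$ directly, so the genuinely hard range is $(|V(B)|-1)(k-1)/2<e(B)\le|V(B)|(k-1)/2$). Two real gaps remain in the heavy case. First, from $2e(B)>(|V(B)|-1)(k-1)$ together with $|V(B)|=k$ you cannot conclude $B=K_k$: you only obtain $e(B)>(k-1)^2/2$, which is strictly less than $k(k-1)/2$. You do get that $B$ is Hamiltonian, but your explicit embedding uses the chord $c\,x_{\ell_2+1}$, and the cut vertex $c$ may have degree as small as $2$ in $B$. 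Second --- and you flag this yourself --- for $|V(B)|\ge k+1$ heaviness yields only $\Delta(B)\ge k-1$, so no vertex of degree $\ge k$ is guaranteed in $B$; nor is there any reason the cut vertex $c$ should lie on a longest cycle of $B$, so ``rooting at $c$ and sending the short leg out to $z$'' does not by itself license re-running the Case~2 counting of Theorem~\ref{mainbiconex}. These are precisely the difficulties that would make a complete argument ``very long,'' and the paper gives no hint of how it resolves them; your proposal is a reasonable plan but not yet a proof.
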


Proof is similar to the proof of Theorem~\ref{mainbiconex}, but is very long and it is not included here.

\begin{corollary}\label{k10}
Let  $G$ be a graph with average degree  $\bar{d} > k-1$. Then every spider of size $k$ is contained in $G$ for $k\le 10$.
\end{corollary}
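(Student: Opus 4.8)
The plan is to imitate the proof of Corollary~\ref{kmenorque11} and to reduce everything to the single new value $k=10$: for $k\le 9$ the statement is exactly Corollary~\ref{kmenorque11}, so it is enough to prove that every spider of size $10$ is contained in $G$ whenever $\bar d>k-1$. As observed just before Theorem~\ref{hamilgen}, we may moreover assume that $G$ is connected, by replacing $G$ with a minimal induced subgraph $H$ satisfying $e(H)>|V(H)|(k-1)/2$; a copy of the spider in $H$ is a copy in $G$.

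Next I would introduce four ``covering families'' for a $10$-spider $S_{\ell_1,\dots,\ell_f}$ with $\ell_1\le\cdots\le\ell_f$ and $\sum_i\ell_i=10$:
(i) $\ell_f\le 4$, i.e.\ every leg has length at most $4$;
(ii) $f=3$;
(iii) at most two legs have length at least $2$, equivalently the spider is a caterpillar;
(iv) $f=4$ and $\ell_1=1$.
A $10$-spider in family~(i) embeds in $G$ by Theorem~4.1 of~\cite{FL07}, one in family~(ii) by Theorem~3.1 of~\cite{FL07}, one in family~(iii) by~\cite{MP93}, and one in family~(iv) by Theorem~\ref{main} (here the reduction to a connected graph is used). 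Hence the corollary follows once we check that families (i)--(iv) together contain \emph{every} spider of size $10$.

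The core of the proof is this exhaustiveness claim, which is a short computation with the partitions of $10$. Suppose $S_{\ell_1,\dots,\ell_f}$ is a $10$-spider lying in none of (i), (ii), (iii). Failing~(i) gives $\ell_f\ge 5$; failing~(iii) means that at least three legs have length $\ge2$, so $f\ge3$ and $\ell_{f-2}\ge2$; failing~(ii) gives $f\ne3$. Consequently
\[
10=\sum_{i=1}^{f}\ell_i\ \ge\ (f-3)\cdot 1+\ell_{f-2}+\ell_{f-1}+\ell_f\ \ge\ (f-3)+2+2+5\ =\ f+6 ,
\]
so $f\le4$, and together with $f\ne3$ this gives $f=4$. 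Now $\ell_2=\ell_{f-2}\ge2$ and $\ell_4=\ell_f\ge5$, so the same inequality reads $10=\ell_1+\ell_2+\ell_3+\ell_4\ge\ell_1+2+2+5$, forcing $\ell_1\le1$, i.e.\ $\ell_1=1$; thus the spider lies in family~(iv). This proves the corollary. (Tracing through the partitions of $10$, the only spider forced into~(iv) but not already covered by (i)--(iii) is $S_{1,2,2,5}$, and this is precisely where the new Theorem~\ref{main} is needed rather than the results of~\cite{FL07,MP93}.)

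I do not expect a genuine obstacle: the only non-elementary ingredient is Theorem~\ref{main}, whose proof is postponed in this note; once it is granted, the corollary is a finite verification. The step that requires care is the bookkeeping in the exhaustiveness claim --- in particular the equivalence ``at most two legs of length $\ge2$'' $\Longleftrightarrow$ ``caterpillar'', and the remark that a $10$-spider with $f\ge5$ legs and a leg of length $\ge5$ is automatically a caterpillar --- so that no partition of $10$ slips past the classification into~(i)--(iv).
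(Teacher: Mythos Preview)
Your proof is correct and follows essentially the same route as the paper: cover all $10$-spiders by the Fan--Sun results (legs of length at most $4$; three-legged spiders), the caterpillar result of~\cite{MP93}, and Theorem~\ref{main} for the single leftover $S_{1,2,2,5}$. Your version is slightly more polished than the paper's --- you give a clean inequality argument for exhaustiveness instead of listing the caterpillars, and you make explicit the passage to a connected minimal subgraph needed to invoke Theorem~\ref{main} --- but the strategy is the same.
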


\begin{proof}
Every spider with legs of length at most 4 are contained in $G$ by Theorem 4.1 of \cite{FL07}. Moreover, every spider with three legs are contained in $G$ by Theorem 3.1 of \cite{FL07}. For the caterpillars $S_{1,1,1,1,1,5}$,
$S_{1,1,1,1,6}$, $S_{1,1,1,2,5}$, $S_{1,1,1,7}$, $S_{1,1,2,6}$ or $S_{1,1,3,5}$  the result is valid by~\cite{MP93}. It remains the spider $S_{1,2,2,5}$, which can be applied the Theorem~\ref{main}.
\end{proof}



\begin{thebibliography}{99}


\bibitem{MP93} P. Brass, G. Karolyi and P. Valtr.  \emph{Recent developments in
combinatorial geometry}, in: B. Aronov, S. Basu, J. Pach and M. Sharir (eds), New Trends in Discrete and
Computational Geometry, Springer, New York (1993).

\bibitem{E65} P. Erd\H{o}s, \emph{Extremal problems in graph theory}, in: M. Fiedler (Ed.),
Theory of Graphs and its Applications, Academic Press, 1965, pp. 29--36.

\bibitem{EG59} P. Erd\H{o}s and T. Gallai, \emph{On maximal paths and
circuits of graphs}, Acta Math. Acad. Sci. Hungar. 10 (1959)
337--356.

\bibitem{FL07} G. Fan and L. Sun, \emph{The Erd\H{os}-S\'os conjecture
for spiders}, Discr. Mathematics 307 (2007) 3055--3062.
\end{thebibliography}
\end{document}